\documentclass[a4paper]{amsart}
\pagestyle{plain}
\pdfoutput=1
\usepackage{amssymb}
\usepackage{mathtools}
\usepackage{graphicx}
\usepackage{caption}
\usepackage{subcaption}
\bibliographystyle{amsplain}
\usepackage{xcolor}

\newcommand{\refchange}[1]{{\color{black}{#1}\color{black}}{}}
\newcommand{\otherchange}[1]{\color{black}{#1}\color{black}{}}

\widowpenalties 1 200
\raggedbottom

\usepackage[colorlinks,
    citecolor=green,
    filecolor=black,
    linkcolor=blue,
    urlcolor=black
]{hyperref}
\hypersetup{linktocpage}
\usepackage{tikz}
\usetikzlibrary{
    decorations.markings
    ,calc
}

\newtheorem{theorem}{Theorem}[section]
\newtheorem{corollary}[theorem]{Corollary}
\newtheorem{lemma}[theorem]{Lemma}
\newtheorem{proposition}[theorem]{Proposition}

\theoremstyle{definition}
\newtheorem{definition}[theorem]{Definition}

\newtheorem{remark}[theorem]{Remark}

\newcommand{\simsub}[1]{\underset{#1}{\sim}}

\title{Regular isotopy classes of link diagrams from Thompson's groups}
\author{Rushil Raghavan}
\author{Dennis Sweeney}

\dedicatory{To the memory of Vaughan Jones.}

\begin{document}

\maketitle

\begin{abstract}
In 2014, Vaughan Jones developed a method to produce links from elements of Thompson's group $F$, and showed that all links arise this way. He also introduced a subgroup $\vec{F}$ of $F$ and a method to produce oriented links from elements of this subgroup. In 2018, Valeriano Aiello showed that all oriented links arise from this construction. We classify exactly those regular isotopy classes of links that arise from $F$, as well as exactly those regular isotopy classes of oriented links that arise from $\vec{F}$, answering a question asked by Jones in 2018.
\end{abstract}

\tableofcontents

\section{Introduction}
In \cite{jones2014unitary}, while developing a method to construct unitary representations of Thompson's group $F$, Jones introduced a construction of knots and links from $F$ as a byproduct of certain unitary representations of $F$. He then showed, in analogy with Alexander's Theorem for producing links from braids, that any link diagram is equivalent to the link diagram given by an element of $F$. He also introduced a subgroup $\vec{F}$ of $F$ called the oriented Thompson group and a construction of oriented links from elements of $\vec{F}$. Later, Aiello \cite{aiello2020} proved that an anologue to Alexander's Theorem holds for $\vec{F}$ as well, showing that every oriented link diagram is equivalent to some link diagram arising from $\vec{F}$. Jones asked in \cite[Section 7, Question 3]{jones2018construction} whether all regular isotopy classes of link diagrams arise from elements of $F$. We answer this question in the negative, and classify exactly which regular isotopy classes of link diagrams arise from elements of $F$. Moreover, we classify exactly which regular isotopy classes of oriented links arise from elements of $\vec{F}$. 

\section{Preliminary Definitions}

\begin{definition} \label{equivalentdef}
Link diagrams $D_1$ and $D_2$ are said to be \textit{equivalent} if they give the same link, or equivalently (from \cite{Reidemeister}) if they are reachable from each other in finitely many Reidemeister moves. In this case, we will write $D_1 \simsub{123} D_2$.
\end{definition}

\begin{definition} \label{regisodef}
Link diagrams $D_1$ and $D_2$ are said to be \textit{regular-isotopic} if they are reachable from each other in finitely many second and third Reidemeister moves, forbidding the first Reidemeister move. In this case, we will write $D_1 \simsub{23} D_2$.
\end{definition}

The notion of regular isotopy of link diagrams was introduced by L. Kauffman in \cite{Kauffman}. This spurred the development of many important link invariants, such as the Kauffman bracket, and regular isotopy invariants of links have since become a common object of study, for instance in \cite{KauffmanInvariant}. In \cite{jones2018construction}, Section 3, Jones discusses a skein theory variant of his construction of links from elements of Thompson's groups. In this variant, regular isotopy invariants of link diagrams arise as coefficients of representations of $F$. 

\begin{definition} \label{whitneydef}
The \textit{Whitney index} $\Omega(K)$ of an oriented knot diagram $K$ will refer to the total curvature of an immersed plane curve \otherchange{divided by $2\pi$} (also called the turning number---the winding number of the derivative vector around the origin), which by the result of \cite{whitney} may be computed for a knot diagram as follows: we choose a base point $b$ on an arc of $K$ that is adjacent to the unbounded component of the complement of $K$. Then we traverse $K$ in the assigned direction starting at $b$, labeling the first and second directions through which a crossing is traversed with a $1$ and a $2$ respectively. Then we can compute the Whitney index as: 
$$
\includegraphics{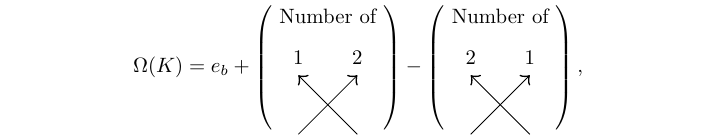}
$$
where $e_b$ depends on the orientation of $K$ at $b$: if the traversal at $b$ has the unbounded component to its right, $e_b=+1$; otherwise, $e_b=-1$. Note that the Whitney index does not depend on the assignment of an over-strand and under-strand at each crossing. When computing the Whitney index of one component $C$ of a link diagram, only the crossings of $C$ with itself are counted; crossings between components are ignored.
\end{definition}

\begin{definition} \label{writhedef}
The \textit{writhe} $w(K)$ of a knot diagram $K$ will refer to the sum of the \textit{signs} of the crossings in $K$, assigned as follows:
$$
\includegraphics{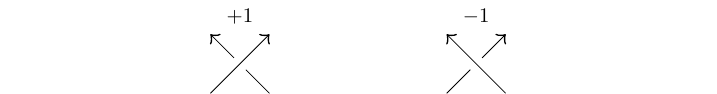}
$$
Again, when computing the writhe of one component $C$ of a link diagram, the sum is only taken over crossings of $C$ with itself; crossings between components are ignored.
\end{definition}

\begin{lemma}[From \cite{coward}] \label{cowardlemma}
A pair $\{D_1, D_2\}$ of oriented link diagrams has $D_1 \simsub{23} D_2$ if and only if both of the following hold:
\begin{enumerate}
    \item $D_1 \simsub{123} D_2$
    \item For each component $C_1$ of $D_1$ and the corresponding component $C_2$ of $D_2$, we have $\Omega(C_1)=\Omega(C_2)$ and $w(C_1)=w(C_2)$.
\end{enumerate}
Above, ``corresponding component'' refers to the correspondence between the components of $D_1$ and $D_2$ defined by the Reidemeister moves implicit in (1).
\end{lemma}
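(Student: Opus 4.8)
The plan is to prove the two directions of Coward's characterization separately, treating the easy direction first and reserving the converse for the substantive work. For the forward direction, I would assume $D_1 \simsub{23} D_2$, meaning $D_1$ and $D_2$ are connected by a finite sequence of second and third Reidemeister moves. Condition (1) is immediate, since second and third Reidemeister moves are a subset of all Reidemeister moves, so $D_1 \simsub{23} D_2$ trivially implies $D_1 \simsub{123} D_2$. For condition (2), I would show that both the Whitney index $\Omega$ and the writhe $w$ of each component are invariant under the second and third Reidemeister moves. This is a direct local computation: one checks that an $R2$ move introduces or removes a canceling pair of crossings of opposite sign (leaving $w$ unchanged) and locally a canceling pair of turning contributions (leaving $\Omega$ unchanged), while an $R3$ move merely slides strands past a crossing without altering the set of signed crossings or the total curvature. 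The restriction to self-crossings of a single component must be tracked carefully through each move, since some local pictures involve two distinct components.

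The converse is the main obstacle, and is precisely the content of Coward's theorem. Here I would assume conditions (1) and (2) and aim to produce a sequence of $R2$ and $R3$ moves connecting $D_1$ to $D_2$. The natural strategy is to start from the full Reidemeister sequence guaranteed by (1) and then eliminate every use of the forbidden $R1$ move. Each $R1$ move changes the writhe of exactly one component by $\pm 1$ and its Whitney index by $\pm 1$ (a positive or negative curl), so the hypothesis in (2) that all components have matching $\Omega$ and $w$ forces the total count of positive and negative $R1$ moves applied to each component to cancel in both invariants simultaneously. The key lemma one needs is that an $R1$ move introducing a positive curl and an $R1$ move introducing a negative curl on the same arc can be traded for a regular isotopy (the so-called Whitney trick, combining the two curls into a cancelable configuration using only $R2$ and $R3$ moves). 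Since the pairing of curls is governed by exactly the two quantities that (2) holds fixed, one can cancel the $R1$ moves in pairs.

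The hard part will be the bookkeeping that makes this cancellation rigorous: one must verify that $\Omega$ and $w$ together are a \emph{complete} obstruction, i.e. that no further invariant beyond these two is needed to realize the regular isotopy. Whitney's theorem (invoked in Definition \ref{whitneydef}) supplies the crucial input: two immersed plane curves are regularly homotopic (as unoriented or oriented immersions) if and only if they have the same turning number, so the underlying shadow-level moves can be arranged to match once the $\Omega$ values agree, and then the crossing data is reconciled using that the $w$ values agree. Because this statement is established in the cited reference \cite{coward}, I would present the forward direction in full and cite \cite{coward} for the converse rather than reconstruct the entire argument. The place where I expect the most care is ensuring that the per-component restriction of $\Omega$ and $w$ interacts correctly with the global regular isotopy, since inter-component crossings are ignored in the definitions but are still moved around by the Reidemeister sequence.
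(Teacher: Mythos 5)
The paper offers no proof of this lemma at all: it is imported wholesale from \cite{coward} (for knots it is Trace's theorem), so the only ``approach'' in the paper is the bare citation. Your proposal is strictly more detailed and is correct as far as it goes. The forward direction you prove is right and routine: second and third Reidemeister moves create or destroy crossings only in cancelling pairs and never change the turning number, and the per-component restriction causes no trouble because a move involving strands of two different components touches no self-crossing of either. For the converse you correctly identify the shape of the argument (cancel the first Reidemeister moves in pairs via the Whitney trick, with Whitney--Graustein supplying completeness of the turning number at the shadow level) and then defer to \cite{coward}, which is exactly where the paper leaves it, so there is no gap relative to the paper. Two refinements if you ever expand the sketch: first, the writhe change and the Whitney-index change of a kink have \emph{independent} signs---the crossing sign (over/under) and the curl direction (left/right) are unrelated---so there are four kink types changing $(w,\Omega)$ by $(+1,+1)$, $(+1,-1)$, $(-1,+1)$, $(-1,-1)$; your phrase ``positive or negative curl'' conflates these. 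The pairing still works: if the net changes in $w$ and $\Omega$ both vanish on a component, the counts $a,b,c,d$ of the four types satisfy $a=d$ and $b=c$, and the Whitney trick cancels exactly those pairs cancelling in both coordinates. Second, for links (rather than knots) the substantive work is not this local cancellation but the global rearrangement---sliding kinks along a component, past crossings with other components, until cancelling pairs are adjacent---which is the content of Coward's ordering theorem and is the part that genuinely cannot be reconstructed in a few lines; citing it there is the right call.
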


\refchange{
\begin{definition}\label{Thompson} Thompson's group $F$ is the group of piecewise-linear, orientation-preserving self-homeomorphisms of $[0,1]$, differentiable everywhere except at finitely many dyadic rational numbers, with derivative a power of $2$ wherever the function is differentiable. 

An element of $F$ can also be depicted as a pair of rooted binary trees, each with the same number of leaves. For each element of $F$ there is a unique ``reduced'' representation as a pair of rooted binary trees, that is, a representation with a minimal number of leaves. We will draw a pair of trees with one tree upside down on top of the other and join their leaves; see the left side of Figure \ref{fig:Ldef} for an example. 

For a more detailed introduction to Thompson's group $F$, see \cite{CFP}.
\end{definition}
}

\section{A classification of the regular isotopy classes in \texorpdfstring{$L(F)$}{L(F)}}
\begin{definition}\label{Ldef}
Let $L$ be the mapping defined in \cite{jones2014unitary} from Thompson's group $F$ to the set of unoriented link diagrams. In this mapping, a reduced pair-of-trees representation of an element of $F$ \refchange{with $n$ leaves} is transformed into a link diagram by adding,
\refchange{for each $i \in \{1, \dots, n-1\}$, an arc connecting the the $i$th non-leaf node (counting left-to-right) of the top tree to the $i$th non-leaf node of the bottom tree. We will refer to such arcs as $\textit{supplementary arcs}$.
Each non-leaf node then becomes a crossing, where a strand connecting that node's left and right descendants passes over a strand connecting the supplementary arc to the node's parent.}
Finally, a \textit{closure strand} is added that connects the roots of the two trees around the left side of the diagram.
\refchange{For better understanding the construction, the reader is referred to \cite{jones2018construction}.}
For an example, see Figure \ref{fig:Ldef}.
\begin{figure}
    \centering
    \includegraphics{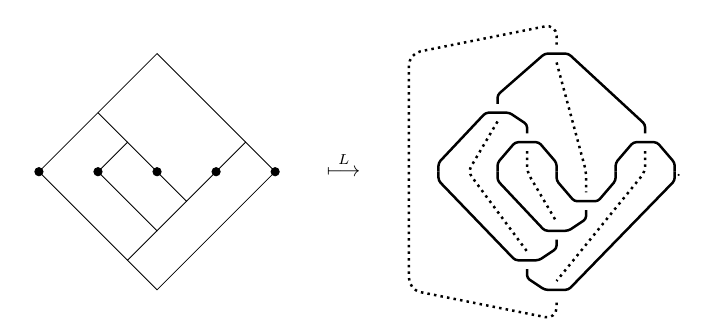}
    \caption{An example of the mapping $L$.}
    \label{fig:Ldef}
\end{figure}
\end{definition}

\begin{lemma}[Theorem 3.0.1 in \cite{jones2018construction}] \label{jonesthompsonlemma}
For any unoriented link diagram $D$, there exists $g\in F$ such that $L(g)\simsub{123}D$.
\end{lemma}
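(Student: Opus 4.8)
The plan is to follow the blueprint of Alexander's theorem, which is explicitly the analogy Jones drew: rather than braiding an arbitrary diagram, I would massage it into the rigid shape that $L$ outputs. Two features of that shape, read off from Definition~\ref{Ldef}, organize the argument. First, the shadow (the underlying $4$-valent plane graph) of $L(g)$ is a medial graph: its vertices are the carets, joined by the tree edges, the leaf strands, the caret-connecting arcs, and the closure strand, and one checks this $4$-valent graph is the medial graph $M(\Gamma(g))$ of a plane graph $\Gamma(g)$ canonically built from the two trees. Second, the over/under datum is \emph{forced} locally at each caret by the ``children over parent'' rule. So the theorem splits into a shadow problem (realize the right $4$-valent graph) and a crossing problem (realize the right over/under pattern), and I would attack them in that order.

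For the shadow problem I would start from $D$, fix a checkerboard coloring, and pass to the associated Tait graph $G$, so that $D$ is presented as the medial link of $G$. The task becomes: find $g \in F$ with $\Gamma(g)$ equal to $G$ up to the planar and Reidemeister flexibility we are allowed. Here I would exploit that a pair of binary trees is exactly the data of a recursive binary subdivision of a disk, and that such subdivisions can encode an arbitrary connected plane graph after choosing a spanning tree and a planar traversal: the top tree $T_+$ records the branching of the traversal on one side of the coloring, the bottom tree $T_-$ records it on the other, the leaf strands glue the two halves along their common boundary, and the left closure strand is matched to the unbounded region by basing the traversal there. Whenever a traversal step does not fit the strict binary nesting, I would refine it by inserting an extra matched pair of carets, which enlarges both trees by one leaf and so keeps $g$ inside $F$ (equal leaf counts) while introducing only a removable feature in the diagram.

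The main obstacle is the crossing problem, and it is genuinely the hard part. Because $L$ fixes the over/under at every caret, the crossings of $L(g)$ cannot be prescribed independently; yet $D$ carries arbitrary crossing signs, and since $\simsub{123}$ preserves the link type these signs cannot simply be ignored. My approach would be to first absorb as many mismatches as possible by rerouting---choosing a different spanning tree or traversal so that the forced rule happens to produce the desired crossing---and then to correct each remaining mismatched crossing locally: insert a canceling Reidemeister~II bigon or a Reidemeister~I curl (both legal for the equivalence $\simsub{123}$), positioned so that after the forced caret rule is applied and the added feature is slid away, the correct crossing remains. The delicate points I expect to fight with are (i) performing these local corrections without destroying the nested tree shape already achieved, and (ii) keeping the two trees' leaf counts equal throughout the bookkeeping; it is exactly these corrections that use the full strength of $\simsub{123}$ rather than mere regular isotopy, and I would expect the write-up to spend most of its effort verifying that they can always be carried out.
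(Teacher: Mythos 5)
You should first know that the paper you are trying to match contains no proof of this statement at all: it is imported verbatim, with attribution, as Theorem~3.0.1 of \cite{jones2018construction} (the result originates in \cite{jones2014unitary}), and the authors of the present paper use it strictly as a black box. So the only meaningful comparison is with Jones's original argument. Your overall architecture --- checkerboard-shade $D$, pass to the Tait graph, observe that $L(g)$ is the medial diagram of a plane graph $\Gamma(g)$ built from the pair of trees with a forced over/under rule at each caret, and then split the task into a ``shadow problem'' and a ``crossing problem'' --- is indeed the shape of Jones's proof, so you have correctly identified the relevant structure.

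However, the proposal has genuine gaps at exactly the two places where all of the work lies, so it is a strategy rather than a proof. First, for the shadow problem, the assertion that a recursive binary subdivision ``can encode an arbitrary connected plane graph after choosing a spanning tree and a planar traversal'' is never justified, and the naive version of it is false: the graphs $\Gamma(g)$ have all of their vertices on the line separating the two trees, with edges in the two half-planes (a two-page book embedding), and not every plane graph admits such an embedding (only subgraphs of planar Hamiltonian graphs do). Realizing a general Tait graph therefore forces you to modify it --- subdivide edges, add parallel edges --- and every such modification must be certified to correspond to a Reidemeister move on the diagram, which depends on the signs of the crossings involved. Consequently the shadow problem and the crossing problem do not decouple in the way your plan assumes. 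Second, for the crossing problem, which you correctly flag as the hard part, you offer only a plan (reroute, then insert R1/R2 features positioned so that ``the correct crossing remains'') with no argument that such insertions always exist, preserve the nested tree structure already achieved, keep the leaf counts of the two trees equal, and terminate; you explicitly defer this to ``most of the effort of the write-up.'' That deferred effort is precisely the content of Jones's proof, which supplies the explicit sign-aware moves putting a signed Tait graph into the standard form arising from a pair of trees. Until those two steps are carried out, the lemma has not been proved.
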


\refchange{
\begin{definition}
    Given some $g \in F$, a \textit{leaf-arc} in $L(g)$
    will refer a small arc in $L(g)$ at the position of a leaf in the reduced pair-of-trees representation of $g$.
    In images like Figure \ref{fig:Ldef}, leaf-arcs
    are solid (not dotted) arcs that cross the central horizontal axis. If the intersections of strands with the central
    horizontal axis are labelled $1, \dots, 2n$, then
    the leaf-arcs are found at the even-numbered intersections
    (the closure strand and supplementary arcs contain the odd-numbered intersections).
\end{definition}
}

\begin{lemma}\label{leaflemma}
For any unoriented link diagram $D$, there is a $g \in F$ such that $L(g)\simsub{123}D$ and such that each component $C$ of $L(g)$ includes some \refchange{leaf-arc.}
\end{lemma}

\begin{proof}
Let $D$ be a link diagram, and by Lemma \ref{jonesthompsonlemma}, let $g_0\in F$ satisfy $L(g_0) \simsub{123} D$. It suffices to manipulate $g_0$ to find some $g\in F$ such that $L(g)\simsub{123}L(g_0)$, but such that each component of $L(g)$ has a leaf-arc.

Suppose $L(g_0)$ has $m$ components without leaf-arcs. Assuming $m\geqslant 1$, we show that there is some $g_1$ with $L(g_1)\simsub{123}L(g_0)$, but with $L(g_1)$ having only $m-1$ components without leaf-arcs; the rest follows by induction,
\refchange{since every $L(g)$ has a leaf-arc in some component}.

Let $C$ be a component of $L(g_0)$ with no leaf-arc. Since $C$ must pass between the upper and lower halves of the diagram and does not do so at a leaf-arc, it must do so at a \refchange{supplementary arc}. We can therefore let $g'$ be the result of applying the move described by Figures \ref{fig:makeleaftree}, \ref{fig:makeleaf_slowmo}, \ref{fig:diagram_before_makeleaf}, and \ref{fig:diagram_after_makeleaf} to $g_0$, in which the arc above the top \refchange{node} of a \refchange{supplementary} arc of $C$ is manipulated with Reidemeister II moves so as to intersect with its leftmost descendent leaf-arc and obtain 3 new leaf-arcs. Let $B$ be the component of $L(g_0)$ containing that leftmost descendent \refchange{leaf-arc}. Since this old leaf-arc of $B$ in $L(g_0)$ is no longer a leaf-arc of $B$ in $L(g')$, we may need to make one more manipulation: applying the same move to $B$ will leave all other components unaffected, except that $C$ will lose 1 leaf-arc, but this is unimportant because $C$ had already gained 3 leaf-arcs from the previous move. Thus, we have found $g_1\in F$, which as a result of the described move potentially iterated twice, makes $L(g_1)$ have exactly one more component with a leaf-arc.
\end{proof}

\begin{figure}
    \centering
    \includegraphics{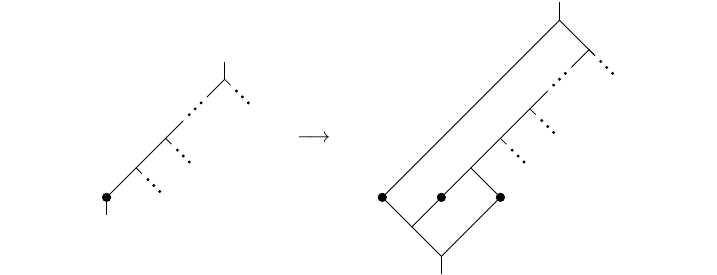}
    \caption{A manipulation on Thompson's group elements that ensures that the link is preserved, but that a particular component of the resulting link passes through a leaf-arc of the resulting diagram}
    \label{fig:makeleaftree}
\end{figure}

\begin{figure}
    \centering
    \raisebox{-0.5\height}{\includegraphics{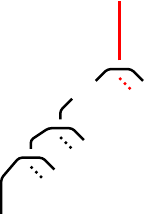}}
    \raisebox{-0.5\height}{\includegraphics{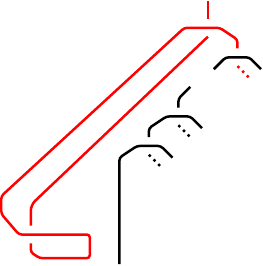}}
    \raisebox{-0.5\height}{\includegraphics{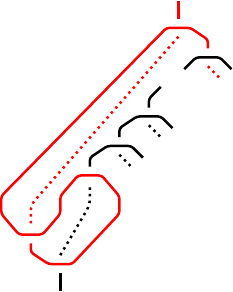}}
    \caption{\refchange{The manipulation of trees in Figure~\ref{fig:makeleaftree}
    corresponds to two Reidemeister II moves on the corresponding link diagrams.}}
    \label{fig:makeleaf_slowmo}
\end{figure}

\begin{figure}
    \centering
    \includegraphics{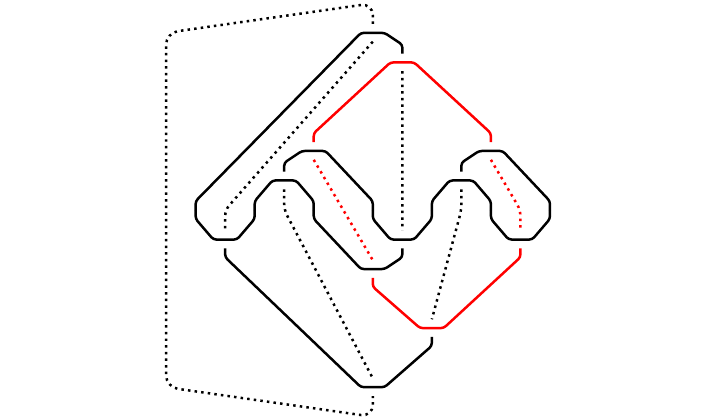}
    \includegraphics{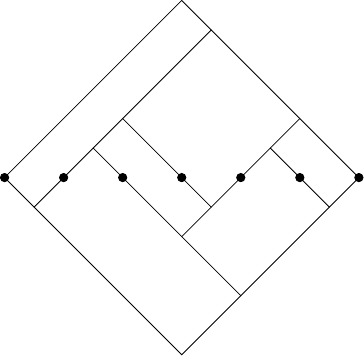}
    \caption{A link diagram where one component (drawn in red) has no leaf-arc, and the corresponding element of Thompson's group}
    \label{fig:diagram_before_makeleaf}
\end{figure}

\begin{figure}
    \centering
    \includegraphics{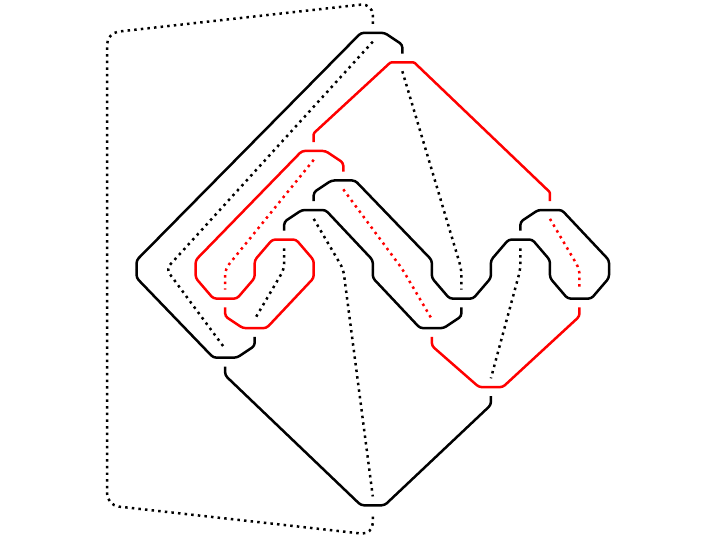}
    \includegraphics{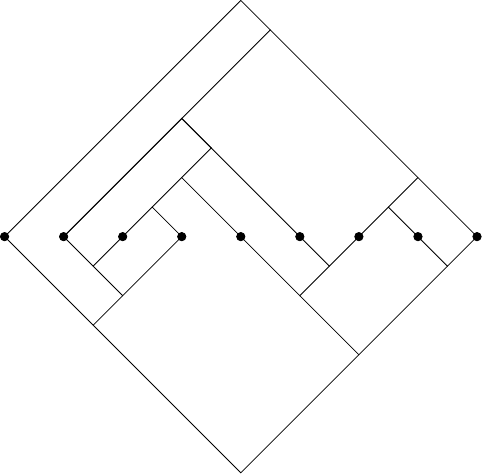}
    \caption{A link diagram regular-isotopic to Figure \ref{fig:diagram_before_makeleaf} but with the red component having a leaf-arc, and the corresponding element of Thompson's group}
    \label{fig:diagram_after_makeleaf}
\end{figure}

\begin{definition}
For a link diagram $D$ with components $C_1, \dots, C_n$, we will define three functions $$
    \alpha_D, \beta_D, \gamma_D \colon \{C_1, \dots, C_n\} \to \mathbb{Z}/2\mathbb{Z}.
$$
For each $i$, let $\alpha_D(C_i)$ be the parity of the the number of crossings of $D$ where $C_i$ is both the under-strand and the over-strand. Let $\beta_D(C_i)$ be the parity of the number of crossings of $D$ for which $C_i$ is the under-strand, but $C_j$ is the over-strand for some $j\neq i$. Let $\gamma_D = \alpha_D + \beta_D$, so that for each $i$, $\gamma_D(C_i)$ is the parity of the total number of crossings in which $C_i$ is the under-strand.
\end{definition}

\begin{proposition} \label{parityprop}
If some component $C$ of a link diagram has $\alpha(C)=0$, then $w(C)$ is even and $\Omega(C)$ is odd. If $\alpha(C) = 1$, then $w(C)$ is odd and $\Omega(C)$ is even.
\end{proposition}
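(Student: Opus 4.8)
The plan is to reduce both assertions to a single parity count governed by the number $s$ of \emph{self-crossings} of $C$---the crossings at which $C$ is simultaneously the over- and under-strand. By definition, $\alpha(C) \equiv s \pmod 2$. Since both $w(C)$ and $\Omega(C)$ are computed using only the self-crossings of $C$ (per Definitions \ref{writhedef} and \ref{whitneydef}), it suffices to express each as a sum of $\pm 1$ contributions, one per self-crossing, together with a possible constant correction, and then reduce mod $2$.

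First I would dispatch the writhe. By Definition \ref{writhedef}, $w(C)$ is the sum of the signs of the $s$ self-crossings of $C$, and each such sign is $\pm 1$. Since $\pm 1 \equiv 1 \pmod 2$, we get $w(C) \equiv s \equiv \alpha(C) \pmod 2$. This already yields that $w(C)$ is even when $\alpha(C)=0$ and odd when $\alpha(C)=1$.

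Next I would treat the Whitney index via the formula in Definition \ref{whitneydef}. That formula expresses $\Omega(C)$ as $e_b$ together with a sum over the self-crossings of $C$, where each crossing contributes a single term of value $\pm 1$ (the sign being determined by the $1$/$2$ labeling of the two traversals). Since $e_b = \pm 1$ and each crossing term is $\pm 1$, reducing mod $2$ gives $\Omega(C) \equiv 1 + s \equiv 1 + \alpha(C) \pmod 2$. Hence $\Omega(C)$ has parity opposite to $\alpha(C)$: odd when $\alpha(C)=0$ and even when $\alpha(C)=1$, which combines with the writhe computation to give the full statement.

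The only real subtlety---and the step I would be most careful about---is confirming that Whitney's formula contributes exactly one odd ($\pm 1$) summand per self-crossing plus the single odd term $e_b$, with no further parity-affecting contributions. This is precisely the structure recorded in Definition \ref{whitneydef}, so once that formula is in hand the argument is a direct mod-$2$ reduction.
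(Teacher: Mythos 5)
Your proof is correct, and it is precisely the argument the paper has in mind: the paper states this proposition without proof, treating it as immediate from Whitney's formula in Definition \ref{whitneydef} and the definition of writhe in Definition \ref{writhedef}. Your mod-2 reduction---$w(C) \equiv s \pmod 2$ from the sum of $\pm 1$ crossing signs, and $\Omega(C) \equiv 1 + s \pmod 2$ from the $e_b$ term plus one $\pm 1$ summand per self-crossing---supplies exactly the verification the paper leaves to the reader.
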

\refchange{
\begin{proof}
    This parity argument follows directly from
    Definitions \ref{whitneydef} and \ref{writhedef}.
\end{proof}
}

\begin{proposition} \label{abcinvariantprop}
$\gamma_D$ and $\alpha_D$ are regular-isotopy invariants, and $\beta_D$ is invariant under all three Reidemeister moves.
\end{proposition}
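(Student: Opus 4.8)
The plan is to prove invariance by examining how each of the three Reidemeister moves affects the relevant crossing counts, tracking separately the three categories of crossing: self-crossings where a component passes over itself ($\alpha$-type), crossings where one component passes under a different over-strand ($\beta$-type), and the corresponding over-crossing role. Since $\alpha_D$, $\beta_D$, and $\gamma_D$ are defined as parities (elements of $\mathbb{Z}/2\mathbb{Z}$), it suffices to show that each move changes the relevant crossing count by an even number for each component.

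First I would handle $\beta_D$, since the claim there is the strongest (invariance under all three moves). The first Reidemeister move adds or removes a single self-crossing of one component $C_i$; such a crossing has $C_i$ as both over- and under-strand, so it is counted by $\alpha_D(C_i)$ but contributes nothing to $\beta_D$, leaving $\beta_D$ unchanged. For the second Reidemeister move, I would case on whether the two strands involved belong to the same component or to different components: if they are the same component, two self-crossings are created or destroyed, affecting neither $\beta$; if they are different components $C_i, C_j$, then the move creates or destroys two crossings, and in exactly one of them $C_i$ is the under-strand while in the other $C_j$ is the under-strand — so $\beta_D(C_i)$ and $\beta_D(C_j)$ each change by exactly one crossing, i.e.\ an even total of zero is wrong, so I must check carefully that both crossings have the \emph{same} component on the bottom. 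The key observation is that in a type II move the two new crossings are stacked with one strand consistently on top, so whichever component supplies the over-strand does so at both crossings; hence for a fixed under-component the count changes by exactly $2$, preserving parity. The third Reidemeister move permutes three crossings without changing which strand is over or under at any of them, so every crossing's classification into $\alpha$-type or $\beta$-type (and the identity of the under-component) is preserved, leaving all counts literally unchanged.

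Next I would treat $\alpha_D$: the type I move changes the number of self-over-self crossings of a single component by exactly one, so $\alpha_D$ is \emph{not} invariant under type I (consistent with the proposition only claiming $\alpha_D$ is a regular-isotopy invariant, i.e.\ invariant under types II and III). For types II and III the same case analysis as above shows the self-over-self count for each component changes by an even number (either $0$ or $2$), so $\alpha_D$ is preserved. Finally, since $\gamma_D = \alpha_D + \beta_D$ in $\mathbb{Z}/2\mathbb{Z}$ and both summands are already shown invariant under types II and III, $\gamma_D$ is immediately a regular-isotopy invariant; the slicker route for $\gamma_D$ is to observe that $\gamma_D(C_i)$ counts, modulo $2$, the total number of times $C_i$ appears as an under-strand, and to verify directly that each of types II and III changes this total by an even number.

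The main obstacle I anticipate is the bookkeeping in the type II move for $\beta_D$: one must verify that both newly created crossings really do share the same under-component, which is a geometric fact about how the finger-push of a type II move stacks the two strands. This is where a careful picture (or explicit appeal to the local model of the move) is needed, and it is the one place where a naive count could erroneously give a change of $\pm 1$ rather than $0$ or $\pm 2$. Everything else reduces to the fact that a type III move is a pure rearrangement preserving all over/under data, and that a type I move touches only a single self-crossing and hence only $\alpha_D$.
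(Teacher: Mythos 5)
Your proof is correct, and since the paper states this proposition without any proof (treating it as a routine verification), your case analysis supplies exactly the argument that is left implicit: a type I move touches only a single self-crossing of one component (so $\beta_D$ is untouched while $\alpha_D$ and $\gamma_D$ each flip, which is why only $\beta_D$ is claimed invariant under all three moves), a type II move creates or destroys two crossings sharing the same over-strand and the same under-strand (so every count changes by $0$ or $2$), and a type III move preserves all over/under data at every crossing. The one rough spot is the mid-paragraph false start where you momentarily assert that the two type II crossings have different under-components, but you catch and correct this with the right geometric observation, so the final argument stands.
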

\refchange{
\begin{proof}
Let $C$ be a component of $D$. A Reidemeister II move will either not change the number of crossings for which $C$ is the under-strand, or change that number by $2$, so $\gamma_D$ is invariant under Reidemeister II moves. Similarly, a Reidemeister II move will either not change the number of crossings for which $C$ is both the over-strand and under-strand, or it will change the number of such crossings by $2$, so $\alpha_D$ is invariant under Reidemeister II moves. 

Reidemeister III moves do not change the number of crossings, 
nor do they change the component containing the over-strand or under-strand of any given
crossing, so Reidemeister III moves preserve $\alpha_D$, $\beta_D$ and $\gamma_D$.

Since $\gamma_D$ and $\alpha_D$ are regular isotopy invariant, so is $\beta_D = \gamma_D - \alpha_D$. It remains to show that $\beta_D$ is invariant under Reidemeister I moves. 
A Reidemeister I move does not remove or add crossings between two different components, so $\beta_D$ is unchanged by a Reidemeister I move. 
\end{proof}
}

\begin{definition} \label{compliantdef}
Call a link diagram $D$ \textit{compliant} if $\gamma_D$ is identically zero, or equivalently, if $\alpha_D = \beta_D$. See Figure \ref{fig:compliantandnon} for an example and a non-example.

\end{definition}

\begin{figure}
    \centering
    \begin{subfigure}{.5\textwidth}
        \centering
        \includegraphics[scale=0.8]{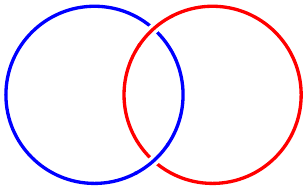}
        \caption{A non-compliant Hopf link}
        \label{fig:noncompliant}
    \end{subfigure}%
    \begin{subfigure}{.5\textwidth}
        \centering
        \includegraphics[scale=0.8]{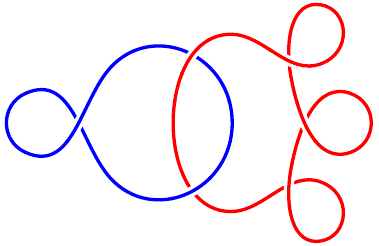}
        \caption{A compliant Hopf link}
        \label{fig:compliant}
    \end{subfigure}
    \caption{Two diagrams of the Hopf link that are not regular-isotopic.\refchange{
In Figure $\ref{fig:noncompliant}$,
$\alpha_D$ of each component is $0$,
$\beta_D$ of each component is $1$,
and $\gamma_D$ of each component is $1$.
In Figure $\ref{fig:compliant}$,
$\alpha_D$ of each component is $1$,
$\beta_D$ of each component is $1$,
and $\gamma_D$ of each component is $0$.
}}
    \label{fig:compliantandnon}
\end{figure}

\begin{theorem} \label{mainthm}
A link diagram $D$ is compliant if and only if there is some element $g\in F$ for which $L(g)\simsub{23} D$.
\end{theorem}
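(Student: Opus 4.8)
The plan is to prove the two directions separately, in each case using Coward's criterion (Lemma \ref{cowardlemma}) to split the regular-isotopy condition into an ordinary equivalence together with a component-by-component matching of writhe and Whitney index.

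For \emph{necessity}, since $\gamma_D$ is a regular-isotopy invariant (Proposition \ref{abcinvariantprop}) and compliance is exactly the condition $\gamma_D\equiv 0$, it suffices to show that every diagram in the image of $L$ is compliant, i.e. $\gamma_{L(g)}\equiv 0$ for all $g\in F$; invariance then forces $\gamma_D=\gamma_{L(g)}\equiv 0$ whenever $L(g)\simsub{23}D$. I would prove $\gamma_{L(g)}\equiv 0$ by a structural pairing. In the construction of $L(g)$ every crossing sits at a caret, and by definition the under-strand there runs from the caret up to its parent and on to the corresponding caret in the opposite half-plane. Hence the caret-connecting arc joining a caret $v$ to its partner $v'$ is the under-strand at \emph{both} the crossing at $v$ and the crossing at $v'$, and the assignment $v\mapsto v'$ is a fixed-point-free involution on the set of all $2(n-1)$ crossings, where $n$ is the number of leaves. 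Therefore, for each component $C$, the set of crossings at which $C$ passes under is a union of such pairs and so has even cardinality, giving $\gamma_{L(g)}(C)=0$.

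For \emph{sufficiency}, given a compliant $D$, use Lemma \ref{leaflemma} to obtain $g_0\in F$ with $L(g_0)\simsub{123}D$ and with every component of $L(g_0)$ carrying a leaf-arc. By Lemma \ref{cowardlemma} it then suffices to modify $g_0$ to some $g$ with $L(g)\simsub{123}D$ whose writhe and Whitney index agree componentwise with those of $D$. The first step is to verify that the required corrections $\Delta w(C)=w_D(C)-w_{L(g_0)}(C)$ and $\Delta\Omega(C)=\Omega_D(C)-\Omega_{L(g_0)}(C)$ are both \emph{even}, and this is precisely where compliance is used. Since $\beta$ is a full Reidemeister invariant (Proposition \ref{abcinvariantprop}), $\beta_{L(g_0)}=\beta_D$; compliance of both $D$ and $L(g_0)$ gives $\alpha=\beta$ for each, so $\alpha_{L(g_0)}(C)=\beta_{L(g_0)}(C)=\beta_D(C)=\alpha_D(C)$; and Proposition \ref{parityprop} then forces $w$ and $\Omega$ to agree in parity between $L(g_0)$ and $D$ on every component, making both corrections even.

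The remaining and hardest step is to realize even changes of $w$ and $\Omega$ on a single chosen component without altering the link type. I would construct local modifications of the tree-pair, attached at the leaf-arc guaranteed by Lemma \ref{leaflemma}, that insert curls (Reidemeister-I loops) on the target component: a single curl changes $(w,\Omega)$ by $(\pm 1,\pm 1)$, and suitable pairs of curls produce the generators $(\pm 2,0)$ and $(0,\pm 2)$ of $2\mathbb{Z}\times 2\mathbb{Z}$ while leaving the link type, and the writhe and Whitney index of every other component, unchanged. Cancelling $\Delta w(C)$ and $\Delta\Omega(C)$ on each component in turn yields $g$ with $L(g)\simsub{123}D$ and matching invariants, so $L(g)\simsub{23}D$ by Lemma \ref{cowardlemma}. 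The main obstacle is exactly this construction: because the crossings produced by $L$ all carry the fixed handedness dictated by the construction, not every curl type is directly available from a single caret insertion, so the even changes must be assembled from combinations. The crux is to exhibit explicit tree-pair moves that stay inside the image of $L$, preserve the link type, are localized to one component via its leaf-arc, and give independent $\pm 2$ control of $w$ and of $\Omega$; verifying that these moves generate all of $2\mathbb{Z}\times 2\mathbb{Z}$ (and no more, consistently with necessity) will require the diagrammatic bookkeeping indicated in the figures.
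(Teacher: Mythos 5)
Your proposal follows the paper's own proof almost step for step, and the parts you actually carry out are correct. The necessity direction---invariance of $\gamma$ under regular isotopy plus the observation that in $L(g)$ the under-strand at each caret crossing is the caret-connecting arc, so that the under-crossings of any component pair off under the top-caret/bottom-caret involution---is exactly the paper's argument (stated there as a bijection between top half-plane and bottom half-plane crossings with $C$ as under-strand); your version even supplies slightly more detail about why that bijection exists. Likewise, your reduction of sufficiency via Lemma \ref{leaflemma}, Lemma \ref{cowardlemma}, the Reidemeister invariance of $\beta$ from Proposition \ref{abcinvariantprop}, and the chain $\alpha_D(C)=\beta_D(C)=\beta_{L(g_0)}(C')=\alpha_{L(g_0)}(C')$ feeding into Proposition \ref{parityprop} to get evenness of the corrections is precisely the paper's reduction.

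The genuine gap is the step you yourself flag as the crux: you never exhibit moves giving independent $\pm 2$ control of $w$ and $\Omega$ on a single component while preserving the link type and staying in the image of $L$. This cannot be waved off as bookkeeping, because it is exactly where the constraint of the construction bites: a single curl insertion is impossible within the image of $L$ (it would flip $\gamma$ of that component, contradicting the compliance of every diagram $L(g')$, which you proved in the necessity direction), so the $(\pm 2,0)$ and $(0,\pm 2)$ changes must be realized by explicit two-crossing tangles that correspond to honest modifications of the tree pair attached at the leaf-arc. The paper supplies precisely this missing content: Figure \ref{fig:unoriented-insertions} gives four explicit replacements of a leaf-arc, each preserving the link and changing exactly one of $w$, $\Omega$ of that component by exactly $\pm 2$, after which Lemma \ref{cowardlemma} closes the argument. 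So your plan is the right one and would complete the proof once those diagrams are drawn and verified, but as written the constructive heart of the sufficiency direction is absent. Two smaller remarks: since Lemma \ref{cowardlemma} concerns oriented diagrams, you should, as the paper does, fix orientations on $D$ and transport them to $L(g_0)$ before speaking of $\Omega$; and ``generating all of $2\mathbb{Z}\times 2\mathbb{Z}$'' is more than is needed---four moves achieving $(\pm 2,0)$ and $(0,\pm 2)$ on the chosen component, leaving all other components untouched, already suffice.
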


\begin{proof}
First suppose that $D$ is a link diagram with some $L(g)\simsub{23} D$.
Since $\gamma_D$ is regular-isotopy invariant, to show that $D$ is compliant, it suffices to show that $L(g)$ is compliant. Let $C$ be a component of $L(g)$. By the construction for $L$, each crossing in the \textit{top} half-plane that has $C$ as the under-strand corresponds bijectively to a crossing in the \textit{bottom} half-plane that has $C$ as its under-strand, \otherchange{using the bijection formed by supplementary arcs}. It follows that there are an even number of such crossings, so $\gamma_D(C) = 0$. This holds for each component $C$, so $L(g)$ is compliant.

Now suppose that $D$ is a compliant link diagram. By Lemma~\ref{leaflemma}, there is some $g\in F$ with $L(g) \simsub{123} D$, and with each component of $L(g)$ having a leaf-arc. Introduce an orientation on each component of $D$, and by the equivalence of the link diagrams, introduce the corresponding orientations on the components of $L(g)$. By Lemma~\ref{cowardlemma}, it suffices to manipulate $g$ (replacing it with some other $g'\in F$) so that for each component $C$ of $D$, the corresponding component $C'$ of $L(\refchange{g'})$ has $\Omega(C)=\Omega(C')$ and $w(C) = w(C')$, while retaining $L(\refchange{g'})\simsub{123}D$. We accomplish this by adjusting $\Omega$ and $w$ for each component individually.

Let $C$ be a component of $D$ and let $C'$ be the corresponding component of $L(g)$. Since $D$ and $L(g)$ are equivalent, and each is compliant, we have
$$
    \alpha_{D}(C) = \beta_D(C) = \beta_{L(g)}(C') = \alpha_{L(g)}(C').
$$
By Proposition~\ref{parityprop}, it follows that $w(C')-w(C)$ and $\Omega(C')-\Omega(C)$ are even. Thus, it suffices to repeatedly add $\pm 2$ to $w(C')$ and $\Omega(C')$ until they match $w(C)$ and $\Omega(C)$. This adjustment can be accomplished by replacing a leaf-arc of $L(g)$ with any of the diagrams in Figure \ref{fig:unoriented-insertions}, as necessary.
\end{proof}

\begin{figure}
    \centering
    \includegraphics{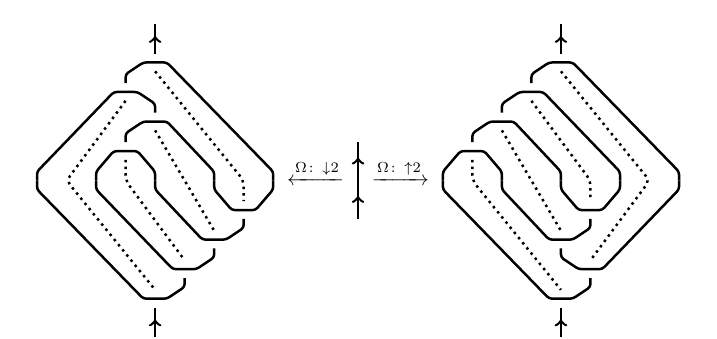}
    \includegraphics{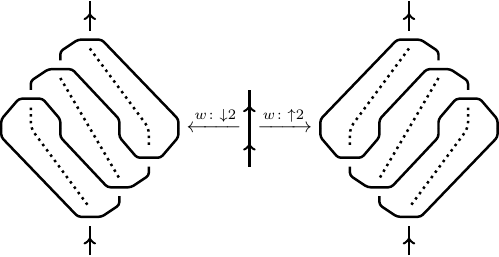}
    \caption{Replacing a leaf-arc with one of the four depicted diagrams results in either the writhe $w$ or the index $\Omega$ of the component containing that leaf-arc increasing or decreasing by $2$. Each move preserves the link. If the leaf-arc is directed down rather than up then the \otherchange{$\Omega$} moves have the inverse effects.}
    \label{fig:unoriented-insertions}
\end{figure}

\section{A classification of the regular isotopy classes in \texorpdfstring{$L_3(F_3)$}{L3(F3)}}

\refchange{
\begin{definition}
    \label{L3def}
    Let $F_3$ be the \textit{ternary Thompson's group}
    defined in \cite{jones2018construction},
    and for each $h \in F_3$ define $L_3(h)$
    to be the link diagram arising from $h$
    as defined in \cite{jones2018construction}.
    This construction is similar to Definition \ref{fig:Ldef},
    except there is no need for supplementary arcs;
    after adding the closure strand,
    all non-leaf vertices are already 4-valent.
\end{definition}

\begin{corollary}
    \label{corollaryf3}
    A link diagram $D$ is compliant if and only if there
    is some $h \in F_3$ such that $L_3(h) \simsub{23} D$.
\end{corollary}
\begin{proof}
    First suppose $D$ is a compliant link diagram.
    Then by Theorem \ref{mainthm}, there is some $g \in F$
    such that $L(g) \simsub{23} D$.
    In \cite[Section 4]{jones2018construction},
    Jones shows that there is a natural embedding
    $\phi\colon F\to F_3$ such that $L_3 \circ \phi = L$.
    Therefore $L_3(\phi(g)) \simsub{23} D$,
    so the desired result holds with $h = \phi(g)$.

    For the converse direction, let $h \in F_3$.
    Let $C$ be a component of $L_3(h)$.
    For each component $B\neq C$
    of $L_3(h)$, by a general fact about plane curves,
    there are an even number of crossings of
    $B$ with $C$, so $C$ has an even number of total crossings
    with other components.
    Let $\delta \in \mathbb{Z}/2\mathbb{Z}$ be the parity
    of the number of crossings in which $C$ is the over-strand
    but not the under-strand.
    Counting all intersections of $C$ with other components,
    we have that $\delta + \beta_{L_3(h)}(C) = 0$, so $\delta = \beta_{L_3(h)}(C)$.
    
    By the construction of $L_3(h)$,
    traversing $C$ in some direction,
    we see that the crossings in which $C$ is the over-strand
    alternate between the top and bottom half-planes:
    each such crossing occurs when $C$
    has a local maximum or minimum in the vertical direction,
    with maxima in the upper half-plane and minima in the lower half-plane. We conclude that there are an even number of
    crossings in which $C$ is the over-strand.
    Counting by whether the under-strand is $C$ as well
    gives $\alpha_{L_3(h)}(C) + \delta = 0$.
    We thus have
    \[
        \alpha_{L_3(h)}(C) = \delta = \beta_{L_3(h)}(C),
    \]
    and this holds for all components $C$, so $L_3(h)$ is compliant.
    Compliance is invariant under regular isotopy,
    so if $D$ is a link diagram with $D \simsub{23} L_3(h)$,
    then $D$ is compliant as well.
\end{proof}
}


\section{A classification of the regular isotopy classes in \texorpdfstring{$\vec{L}(\vec{F})$}{\vec{L}(\vec{F})}}

\begin{definition} \label{vecfdef}
The oriented Thompson group $\vec{F} < F$ is the subgroup of elements $g\in F$ for which \refchange{the} checkerboard-shading \refchange{of} $L(g)$ gives a Seifert surface, i.e., the graph of shaded faces connected across vertices is bipartite. See \cite[Definition 5.2.0.7.]{jones2014unitary}.
\end{definition}

\begin{definition} \label{vecldef}
Let $\vec{L}$ be the mapping defined in \cite{jones2014unitary} from the oriented Thompson's group $\vec{F}$ to the set of oriented link diagrams. For example, see Figure \ref{fig:vecLdef}.
\begin{figure}
    \centering
    \includegraphics{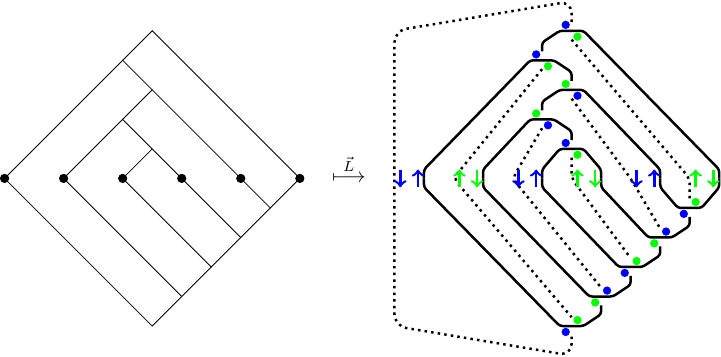}
    \caption{An example of the mapping from $\vec{F}$ to oriented links}
    \label{fig:vecLdef}
\end{figure}
For any $g\in\vec{F}$, $\vec{L}(g)$ is the same link diagram as $L(g)$ in Definition~\ref{Ldef}, but with a canonical orientation assigned to the strands. The faces marked with blue (including the region enclosed by the closure strand) should be surrounded by arcs directed counterclockwise around that face, and the green faces should have clockwise-directed arcs. Since the graph of colored faces connected at crossings is bipartite, this introduces a consistent orientation on the components of the link.
\end{definition}

\begin{lemma}[From \cite{aiello2020}] \label{aiellolemma}
Given an oriented link diagram $D$, there is an element $g\in \vec{F}$ such that $\vec{L}(g)\simsub{123}D$.
\end{lemma}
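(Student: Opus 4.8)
The plan is to exploit the fact that, by Definitions~\ref{vecfdef} and~\ref{vecldef}, a diagram in the image of $\vec L$ is exactly a \emph{special diagram} carrying its canonical orientation: its underlying unoriented diagram is checkerboard-colorable with bipartite face graph, one color class of faces bounds a Seifert surface, and the orientation is the boundary orientation of that surface. Since $D_1 \simsub{123} D_2$ means precisely that $D_1$ and $D_2$ represent the same oriented link, it suffices to produce, for the oriented link type of $D$, a special diagram that also arises as $\vec L(g)$ with $g \in \vec F$. I would split this into two steps: realizing the link type by a special diagram, and realizing a special diagram by an element of $\vec F$.

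For the first step I would invoke the classical fact that every oriented link admits a special diagram. Concretely, apply Seifert's algorithm to $D$ to obtain a Seifert surface $F$, and then present $F$ in the standard disk-band form; its boundary, drawn so that $F$ occupies one checkerboard color class, is a special diagram $D'$ of the same oriented link, so $D \simsub{123} D'$. By construction the face graph of $D'$ is bipartite and its orientation is the Seifert boundary orientation, matching exactly the canonical orientation used by $\vec L$.

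For the second step I would convert $D'$ into a pair of trees. Running the normalization behind Lemma~\ref{jonesthompsonlemma} on the underlying unoriented diagram $\lvert D'\rvert$ isotopes it into the standard planar position in which the two checkerboard colors become the upper and lower half-planes and each crossing becomes a caret, so that one reads off a pair-of-trees representative $g=(T_+,T_-)$. Because the coloring of $D'$ is bipartite, the shaded faces of $L(g)$ form a Seifert surface, so $g \in \vec F$ by Definition~\ref{vecfdef}; and because the orientation of $D'$ is the corresponding boundary orientation, $\vec L(g)$ reproduces it. Any extra subdivision needed to reach tree position can be absorbed by passing to a larger, non-reduced pair-of-trees representative, which changes neither $g$ nor $L(g)$. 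Chaining the two steps yields $\vec L(g) \simsub{123} D' \simsub{123} D$.

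The crux, and the place I expect the real work, is making the second step faithful. Lemma~\ref{jonesthompsonlemma} as stated returns only \emph{some} $g \in F$ whose $L(g)$ equals $\lvert D'\rvert$ up to Reidemeister moves, with no control over the checkerboard structure or the orientation; a priori the diagram it outputs need not be special at all. The substantive task is therefore to re-run Jones's argument so that it respects the extra data carried by a special diagram, verifying both that the normalization preserves bipartiteness of the coloring (hence that $g$ lands in $\vec F$) and that the canonical orientation it assigns agrees with the Seifert orientation of $D'$, rather than merely recovering the underlying unoriented link.
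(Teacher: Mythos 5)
First, some context: the paper does not prove this lemma at all --- it is imported wholesale from Aiello's paper \cite{aiello2020}, which is devoted entirely to establishing this statement. So the comparison here is between your sketch and a paper-length argument. Your outline is not unreasonable: by Definition~\ref{vecfdef}, every diagram in the image of $\vec{L}$ is indeed a special diagram (its shaded checkerboard surface is a Seifert surface whose boundary orientation is the canonical one), and the classical fact you invoke in your first step --- that every oriented link admits a special diagram, obtained by putting a Seifert surface in disk-band form --- is true and citable. However, your opening claim is too strong: the image of $\vec{L}$ is not ``exactly'' the special diagrams with canonical orientation; it consists of special diagrams of a very rigid tree-like shape (two stacks of caret crossings joined leaf-to-leaf, with a closure strand), which is a far smaller class. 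That overstatement is precisely why your second step is not a formality, and that second step is where the proposal genuinely fails.

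The gap is the one you yourself flag and then leave open: ``re-running the normalization behind Lemma~\ref{jonesthompsonlemma}'' is not an available move. Jones's argument produces \emph{some} $g \in F$ with $L(g) \simsub{123} \lvert D' \rvert$ as unoriented diagrams, but the isotopies and Reidemeister moves it uses (including moves of type I and II) do not preserve the checkerboard structure or the special property, so nothing forces $g \in \vec{F}$; and even if $g$ landed in $\vec{F}$ by accident, an unoriented equivalence gives no control over whether the canonical orientation of $\vec{L}(g)$ is carried to the orientation of $D'$ --- a mismatch of orientations on individual components is exactly the failure mode one must rule out. Upgrading Jones's construction to respect this extra data is the entire mathematical content of Aiello's theorem, not a verification one can defer; indeed, this paper itself illustrates how the oriented setting demands strictly harder constructions than the unoriented one (compare Lemma~\ref{orientedleaflemma} and Figure~\ref{fig:oriented-insertions} with their unoriented counterparts, where new moves had to be designed just to preserve orientedness). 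As written, your proposal reduces the lemma to a task that is essentially a restatement of the lemma; it is an honest plan of attack, but it is not a proof, and the correct resolution here is the paper's own: cite \cite{aiello2020}.
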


\begin{lemma}\label{orientedleaflemma}
For any oriented link diagram $D$, there is some $g\in\vec{F}$ such that $\vec{L}(g)\simsub{123}D$ and such that each component of $\vec{L}(g)$ has some leaf-arc.
\end{lemma}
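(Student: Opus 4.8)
The plan is to imitate the proof of Lemma~\ref{leaflemma} as closely as possible, with Aiello's theorem (Lemma~\ref{aiellolemma}) replacing Jones's theorem (Lemma~\ref{jonesthompsonlemma}) as the starting point, and with one extra verification at each inductive step. First I would apply Lemma~\ref{aiellolemma} to obtain $g_0 \in \vec{F}$ with $\vec{L}(g_0) \simsub{123} D$. Then, exactly as before, I would induct on the number $m$ of components of $\vec{L}(g_0)$ that contain no leaf-arc: assuming $m \geq 1$, I would pick such a component $C$, observe that it crosses between the two half-planes only along caret-connecting arcs, and apply the make-leaf move of Figures~\ref{fig:makeleaftree}--\ref{fig:diagram_after_makeleaf} (possibly iterated once to repair the component $B$ that loses its leaf-arc) to produce $g_1$ with one fewer leafless component and $\vec{L}(g_1) \simsub{123} \vec{L}(g_0)$.

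The one genuinely new point, and the place where the oriented case departs from Lemma~\ref{leaflemma}, is that I must check that the make-leaf move carries $\vec{F}$ into $\vec{F}$; only then is $g_1$ a legitimate element of $\vec{F}$ and $\vec{L}(g_1)$ defined. Because the move is assembled entirely from second Reidemeister moves, it preserves the orientations of all strands, so $\vec{L}(g_1)$ inherits a coherent orientation from that of $\vec{L}(g_0)$; what remains is to confirm that this is the canonical checkerboard orientation, i.e.\ that the checkerboard shading of $L(g_1)$ is still a Seifert surface in the sense of Definition~\ref{vecfdef}. I would establish this by examining the local change to the diagram between Figures~\ref{fig:diagram_before_makeleaf} and~\ref{fig:diagram_after_makeleaf}: the new carets are introduced in a small neighborhood, and I would track how they subdivide the shaded and unshaded faces there, checking that bipartiteness of the face-adjacency graph is undisturbed.

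The main obstacle is exactly this $\vec{F}$-invariance check, since introducing a crossing can in principle toggle the parity that governs coherence of the checkerboard shading. If a single make-leaf move preserves the shading, the induction goes through verbatim. If instead one application breaks bipartiteness, I would repair it by performing the move in a parity-neutral form---either applying it twice so that an even number of carets is added (note that the proof of Lemma~\ref{leaflemma} already sometimes iterates the move to repair $B$), or inserting the new carets in a symmetric pair---so that the net effect on the checkerboard graph is trivial while the target component $C$ still acquires a leaf-arc. Since at this stage we need only $\simsub{123}$-equivalence together with leaf-arcs and membership in $\vec{F}$ (the writhe and Whitney-index bookkeeping of Lemma~\ref{cowardlemma} being deferred to the oriented analogue of Theorem~\ref{mainthm}), we have ample freedom to add extra carets, so this repair costs nothing.
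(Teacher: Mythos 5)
Your overall skeleton (Aiello's theorem in place of Jones's, then the same induction on the number of leafless components) is exactly the paper's, and you have correctly located the crux: the make-leaf move must carry $\vec{F}$ into $\vec{F}$. But your resolution of that crux has a genuine gap. Your first branch --- that the unoriented move of Figures~\ref{fig:makeleaftree}--\ref{fig:diagram_after_makeleaf} might simply preserve the checkerboard/Seifert condition --- is not available: this is false in general, which is precisely why the paper does \emph{not} reuse that move and instead designs a different, more elaborate move (Figure~\ref{fig:oriented-makeleaf}) whose whole purpose is to preserve orientedness. One way to see the failure: a second Reidemeister move pushing one strand across another creates a bigon, and that bigon's boundary is coherently oriented only if the two strands are anti-parallel; equivalently, on the level of shading, if the face between the two strands is unshaded, the move adds two edges joining the shaded faces on either side, and if those faces coincide or sit at even distance in the shaded-face graph, an odd cycle appears and bipartiteness is destroyed. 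The unoriented make-leaf move gives you no control over the relative directions of the strands it slides across one another, so membership in $\vec{F}$ can genuinely be lost. This also undermines your verification strategy of ``tracking faces in a small neighborhood'': the obstruction is global (it depends on the distance in the face graph between the regions being joined), so no purely local inspection can certify that bipartiteness survives.

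Your fallback repair is where the argument breaks down irreparably as stated. Bipartiteness of the shaded-face graph is not a $\mathbb{Z}/2\mathbb{Z}$-valued quantity that flips with each application of the move, so ``applying the move twice'' or ``adding an even number of carets'' does not cancel the damage: once an odd cycle has been created, performing the same move a second time (necessarily elsewhere in the diagram, since the move consumes the caret-connecting arc it acts on) adds further vertices and edges but does not delete that cycle. The ``symmetric pair'' variant is not specified concretely enough to evaluate, and nothing in the proposal explains why any such symmetrization would restore orientability of the checkerboard surface. What is actually needed --- and what constitutes the entire content of this lemma beyond Lemma~\ref{leaflemma} --- is a redesigned move, built so that every new crossing it introduces is compatible with the canonical face-coherent orientation (for instance, so that all strand slides happen across anti-parallel strands). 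That construction is what the paper supplies in Figure~\ref{fig:oriented-makeleaf}, and it is missing from your proposal.
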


\begin{proof}
The proof is identical to that of Lemma \ref{leaflemma} for the unoriented case, except a more complicated move is needed to maintain the orientedness of the diagram while ensuring that a component has a leaf-arc. This move is realized by Figure~\ref{fig:oriented-makeleaf}.
\refchange{
To verify that orientedness is preserved,
consider the checkerboard shading of some link diagram
that has some portion that looks like the diagram on the left
of Figure~\ref{fig:oriented-makeleaf}.
Replacing that portion with the diagram on the right
maintains that the outside faces on the right
have the opposite coloring as the face on the left.
These are the only shaded faces adjacent to this portion of the diagram,
and all newly added faces are the opposite color
to their neighbors (each blue dot is opposite a green dot and vice-versa), and so we still have a bipartition.
The case where blue and green are swapped is identical.
}
\end{proof}

\begin{figure}
    \centering
    \includegraphics{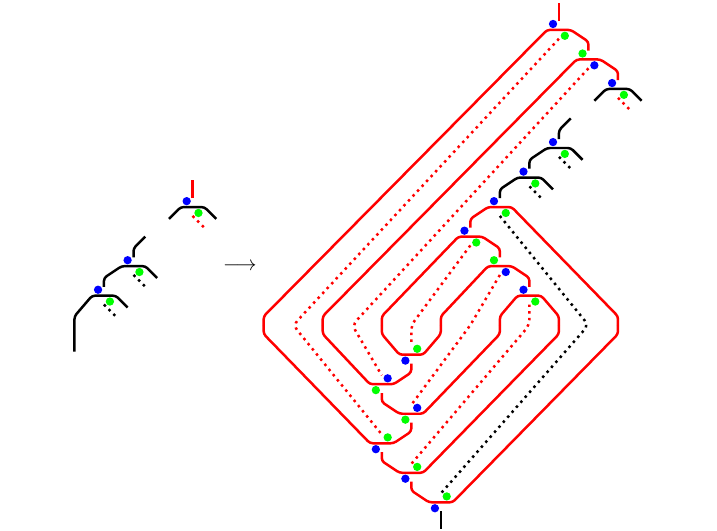}
    \caption{A transformation on link diagrams from $\vec{F}$ that preserves the link, but ensures that the red component goes through a leaf-arc. This move is similar to to the move shown in Figures \ref{fig:makeleaftree}, \ref{fig:makeleaf_slowmo}, \ref{fig:diagram_before_makeleaf}, and \ref{fig:diagram_after_makeleaf}, but this move is more complicated so as to preserve orientedness.}
    \label{fig:oriented-makeleaf}
\end{figure}

\begin{remark}
In the theorem below, we show that the condition required for a regular isotopy class of links to be represented by $\vec{F}$ is strictly stronger than the condition for $F$. The proof follows the structure of Theorem \ref{mainthm}, but we will work in $\mathbb{Z}$ rather than $\mathbb{Z}/2\mathbb{Z}$.
\end{remark}

\begin{definition}
For an oriented link diagram $D$ with components $C_1, \dots, C_n$, we will define three functions
$$
    w_D, cw_D, u_D\colon \{C_1, \dots, C_n\} \to \mathbb{Z}.
$$
For each $i$, let $w_D(C_i)=w(C_i)$ be the sum of the signs of the crossings of $D$ where $C_i$ is both the under-strand and the over-strand. Let $cw_D(C_i)$ be the sum of the signs of the crossings of $D$ for which $C_i$ is the under-strand but $C_j$ is the over-strand for some $j\neq i$. Let $u_D = w_D + cw_D$, so that for each $i$, $u_D(C_i)$ is the sum of the signs of all crossings where the under-strand is $C_i$.
\end{definition}

\begin{proposition} \label{orparityprop}
For any component $C$ of a link diagram $D$,
the integers $w_D(C)$ and $\Omega(C)$ have opposite parities.
\end{proposition}
\refchange{\begin{proof}
This is clear by Definitions \ref{whitneydef} and \ref{writhedef}.
\end{proof}}

\begin{proposition} \label{orinvariantprop}
$w_D$ and $u_{\otherchange{D}}$ are regular-isotopy-invariant, and $cw_{\otherchange{D}}$ is invariant under all three Reidemeister moves.
\end{proposition}
\otherchange{
\begin{proof}
    Whether the two involved strands are directed in the same
    or opposite directions, Reidemeister II moves add or remove
    two crossings with opposite signs.
    If the two strands are part of the same component,
    then $w_D$ for that component is unaffected
    because the two signs cancel,
    while $w_D$ for all other components clearly remains unchanged.
    In this case $cw_D$ is unchanged as well because
    there is no change to crossings between components.
    On the other hand, if the two strands involved
    in a Reidemeister II move are distinct,
    then $cw_D$ is unchanged for the under-strand component
    because the two signs cancel,
    and $cw_D$ is clearly unchanged for other components.
    In this case $w_D$ is unchanged as well
    because only crossings between components are affected.
    Since Reidemeister II preserves $w_D$ and $cw_D$,
    it preserves $u_D = cw_D + w_D$ as well.

    Reidemeister III moves do not affect the signs
    of crossings nor which component is the under-strand
    or over-strand, so $cw_D$, $w_D$ and $u_D$
    are preserved by Reidemeister III moves.
    
    Lastly, $cw_D$ is unaffected by Reidemeister I moves because
    Reidemeister I moves only affect the set of
    crossings within one component.
\end{proof}
}

\begin{definition} \label{orcompdef}
Call an oriented link diagram $D$ \textit{oriented-compliant} if $u_D$ is identically zero, or equivalently, if $w_D = -cw_D$. See Figure \ref{fig:orcompliantandnon} for an example and non-example\refchange{: in the left diagram, $u_D = (2,-2)$, so it is not oriented-compliant, while in the right diagram, $u_D = (0,0)$, so it is oriented-compliant.}
\end{definition}

\begin{remark}
Oriented-compliance is closely related to the \textit{linking matrix}. \refchange{If the link diagram $D$ has components $C_1,\dots,C_n$, then the $(i,j)$th entry of the linking matrix of $D$ is one half of the sum of the signs of the crossings between $C_i$ and $C_j$,
which is equal to the sum of
the signs of the crossings in which
$C_i$ is the under-strand and $C_j$
is the over-strand.
}
If we replace,  \refchange{for all $i$}, the \refchange{$i$th} diagonal entry with the writhe of \refchange{$C_i$}, then the oriented-compliance of $D$ is equivalent to each row of the resulting symmetric matrix summing to zero.
\end{remark}

\begin{figure}
    \centering
    \includegraphics[scale=0.8]{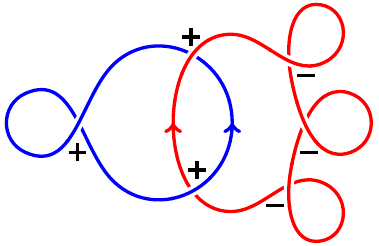}
    \hspace{1cm}
    \includegraphics[scale=0.8]{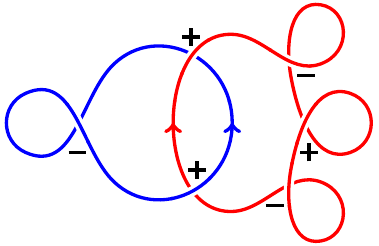}
    \caption{Shown are two diagrams of the Hopf link that are not regular-isotopic. The left diagram is compliant but not oriented-compliant, while the right diagram is both compliant and oriented-compliant.}
    \label{fig:orcompliantandnon}
\end{figure}

\begin{theorem} \label{orientedmainthm}
An oriented link diagram $D$ is \textit{oriented-compliant} if and only if there is some $g\in \vec{F}$ such that $\vec{L}(g) \simsub{23} D$.
\end{theorem}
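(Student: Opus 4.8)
The plan is to follow the template of Theorem~\ref{mainthm}, replacing the $\mathbb{Z}/2\mathbb{Z}$-valued functions $\alpha,\beta,\gamma$ by the $\mathbb{Z}$-valued $w_D, cw_D, u_D$ and upgrading the parity bookkeeping to honest integer bookkeeping. As a preliminary observation I note that the forward direction will establish the stronger fact that \emph{every} diagram $\vec{L}(g)$ with $g\in\vec{F}$ is oriented-compliant, and I will reuse this fact in the reverse direction.

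For the forward direction, suppose $\vec{L}(g)\simsub{23}D$. Since $u_D$ is regular-isotopy invariant (Proposition~\ref{orinvariantprop}), it suffices to check that $\vec{L}(g)$ is oriented-compliant, i.e.\ that $u_{\vec{L}(g)}(C')=0$ for every component $C'$. As in the proof of Theorem~\ref{mainthm}, the $L$ construction pairs each crossing in the top half-plane at which $C'$ is the under-strand with the crossing in the bottom half-plane joined to it by the vertical (parent-to-corresponding-caret) strand, at which $C'$ is again the under-strand. The new content beyond the unoriented case is a sign computation: I would verify, by a purely local computation at each corresponding pair of carets, that under the canonical orientation coming from the blue/green checkerboard shading the two paired crossings carry opposite signs. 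Granting this, the signed sum defining $u_{\vec{L}(g)}(C')$ collapses termwise to zero; note that this holds whether the over-strand is $C'$ itself (a contribution to $w$) or another component (a contribution to $cw$), so no case distinction is needed.

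For the reverse direction, suppose $D$ is oriented-compliant. By Lemma~\ref{orientedleaflemma} choose $g\in\vec{F}$ with $\vec{L}(g)\simsub{123}D$ and with every component carrying a leaf-arc, and orient $\vec{L}(g)$ compatibly with $D$. By Lemma~\ref{cowardlemma} it suffices to modify $g$ within $\vec{F}$, preserving $\vec{L}(g)\simsub{123}D$, until $\Omega(C')=\Omega(C)$ and $w(C')=w(C)$ for each component. The crucial simplification---and the reason oriented-compliance is the correct hypothesis---is that the writhe condition comes for free: since $cw$ is invariant under all three Reidemeister moves and $D\simsub{123}\vec{L}(g)$ we have $cw_D(C)=cw_{\vec{L}(g)}(C')$, and since both diagrams are oriented-compliant (the latter by the forward direction) this gives $w(C)=w_D(C)=-cw_D(C)=-cw_{\vec{L}(g)}(C')=w_{\vec{L}(g)}(C')=w(C')$. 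Thus only the Whitney indices must be reconciled, and by Proposition~\ref{orparityprop} together with $w(C)=w(C')$ the difference $\Omega(C')-\Omega(C)$ is even.

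It therefore remains to adjust $\Omega(C')$ in steps of $\pm 2$ without disturbing any writhe and without leaving $\vec{L}(\vec{F})$. I expect this final step to be the main obstacle and the locus of genuinely new work: the unoriented insertions of Figure~\ref{fig:unoriented-insertions} each change $w$, so they are unavailable here, and any replacement must simultaneously be writhe-neutral and respect the bipartite/Seifert condition of Definition~\ref{vecfdef} so that the modified diagram is still $\vec{L}(g')$ for an honest $g'\in\vec{F}$. The idea is to insert at a leaf-arc an oriented tangle built from two curls whose Whitney-index contributions add while their writhe contributions cancel, so that the net effect is to increase $\Omega$ by $2$ while leaving $w$ (and every other component's index and writhe) unchanged; a mirrored tangle decreases $\Omega$ by $2$. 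I would exhibit such a tangle explicitly as a local replacement of a leaf, verify that it keeps the checkerboard shading a Seifert surface, and then iterate it the required number of times on each component, producing the desired $g'\in\vec{F}$ with $\vec{L}(g')\simsub{23}D$.
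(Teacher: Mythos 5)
Your proposal is correct and follows essentially the same route as the paper: the forward direction via pairing each top-half-plane crossing with its bottom-half-plane partner and checking opposite signs, and the reverse direction via Lemma~\ref{orientedleaflemma}, Lemma~\ref{cowardlemma}, the free writhe equality obtained from the Reidemeister-invariance of $cw$ together with oriented-compliance of both diagrams, and then leaf-arc insertions shifting $\Omega$ by $\pm 2$ while preserving writhe and orientedness. The writhe-cancelling two-curl tangle you describe as the remaining work is exactly what the paper exhibits in Figure~\ref{fig:oriented-insertions}.
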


\begin{proof}
First suppose that $D$ is an oriented link diagram with some $g \in \vec{F}$ such that $\vec{L}(g) \simsub{23} D$. Since $u_D$ is regular-isotopy invariant, to show that $D$ is oriented-compliant, it suffices to show that $\vec{L}(g)$ is oriented-compliant. Let $C$ be a component of $\vec{L}(g)$. By the construction for $\vec{L}$, each crossing in the top half-plane bijectively corresponds to a crossing in the bottom half-plane, and since the shaded face between these two crossings is consistently oriented either clockwise or counterclockwise, the two crossings have opposite signs. It follows that $u_D(C)$ vanishes. This holds for each component $C$, so $D$ is oriented-compliant.

Now suppose that $D$ is an oriented-compliant oriented link diagram. By Lemma \ref{orientedleaflemma}, there is some $g\in \vec{F}$ with $\vec{L}(g) \simsub{123} D$ and with each component of $\vec{L}(g)$ having a leaf-arc. By Lemma~\ref{cowardlemma}, it suffices to manipulate $g$ (replacing it by some other $g'\in\vec F$) so that for each component $C$ of $D$ and corresponding component $C'$ of $\vec{L}(\refchange{g'})$, we have $\Omega(C)=\Omega(C')$ and $w(C) = w(C')$, while maintaining $\vec{L}(\refchange{g'})\simsub{123} D$.

Let $C$ be a component of $D$ and let $C'$ be the corresponding component of $L(g)$.
Since $cw$ is invariant under all Reidemeister moves and since $D$ and $\vec{L}(g)$ are oriented-compliant, we already have
$$
    w(C)=w_D(C) = - cw_D(C) = - cw_{\vec{L}(g)}(C') = w_{\vec{L}(g)}(C') = w(C'),
$$
so we only need to correct the Whitney indices of $\vec{L}(g)$ to match those of $D$. By Proposition~\ref{orparityprop}, $\Omega(C') \equiv w(C')+1 = w(C)+1 \equiv \Omega(C) \pmod 2$.
It thus suffices to adjust the Whitney index of $C'$ by $\pm 2$ at a time. This can be achieved by using one of the moves shown in Figure \ref{fig:oriented-insertions}, in which one of the leaf-arcs of $C'$ is replaced by a different diagram. This move preserves orientedness, so we can freely adjust the Whitney index of $C'$ by multiples of 2 until $\Omega(C')=\Omega(C)$.

After repeating this process of adjusting $\Omega$ for each component $C$ of $L(g)$, we have that $D$ and $L(g)$ satisfy the suppositions of Lemma \ref{cowardlemma}, so $D\simsub{23} L(g)$.
\end{proof}

\begin{figure}
    \centering
    \includegraphics{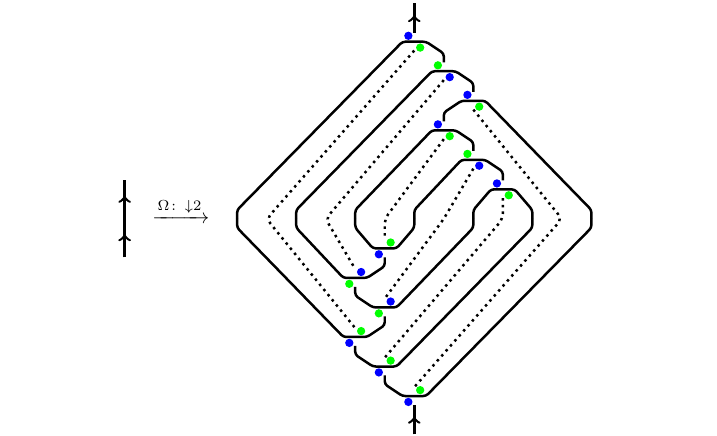}
    \includegraphics{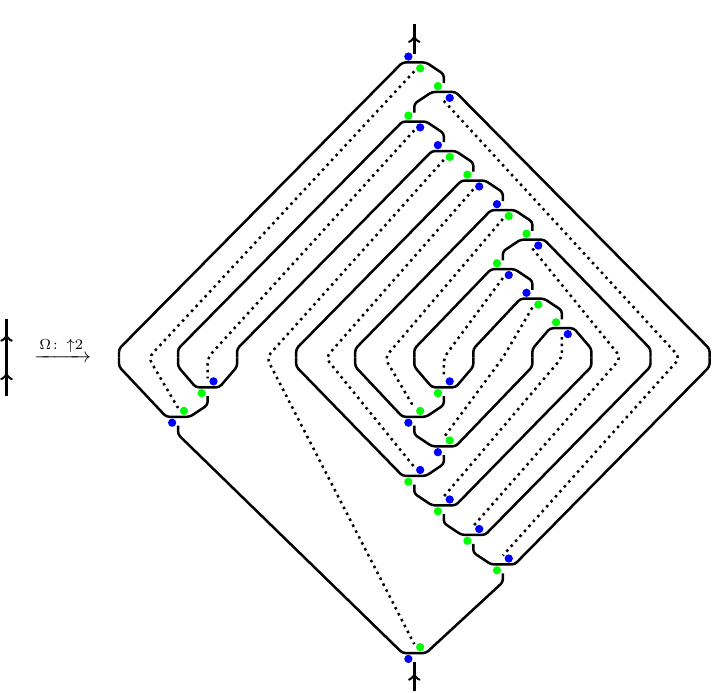}
    \caption{These two diagrams can be inserted at any leaf-arc of an existing diagram from $\vec{F}$ to get another oriented diagram with the index $\Omega$ changed by $\pm 2$. If the leaf-arc is directed down instead of up, these same moves apply, but they have the inverse effects.}
    \label{fig:oriented-insertions}
\end{figure}

\otherchange{
\section{A classification of the regular isotopy classes in \texorpdfstring{$\vec{L_3}(\vec{F_3})$}{\vec{L_3}(\vec{F_3})}}

\begin{definition}
    The oriented ternary Thompson group $\vec{F_3} < F_3$
    is the subgroup of elements $h \in F_3$
    for which the checkerboard-shading of $L_3(h)$
    (see Defintion~\ref{L3def}) gives a Seifert surface,
    i.e., the graph of shaded faces connected across vertices is bipartite.
    See Figure~\ref{fig:oreinted-three-mapping} for an example.
\end{definition}

\begin{definition}
    Let $\vec{L_3}$ be the mapping from $\vec{F_3}$
    to oriented link diagrams, as shown in Figure~\ref{fig:oreinted-three-mapping},
    similar to definition \ref{vecldef}.
\end{definition}

\begin{figure}
    \centering
    \includegraphics{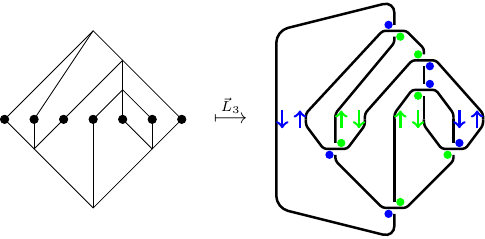}
    \caption{An example of the mapping from $\vec{F_3}$ to oriented links}
    \label{fig:oreinted-three-mapping}
\end{figure}

\begin{remark}
    Despite the similarity of the definitions of
    $\vec{L_3}$ and $\vec{L}$ (Definition~\ref{vecldef}),
    the theorem below shows that the image of
    $\vec{L_3}$ has strictly more regular isotopy classes.
    In partiuclar, the  regular isotopy classes
    in the image of $\vec{L_3}$ are the same
    as the regular isotopy classes
    in the images of the unoriented mappings $L$ and $L_3$
    from Definitions~\ref{Ldef} and~\ref{L3def}.
\end{remark}

\begin{theorem}
    An oriented link diagram $D$ is compliant
    if and only if there is some $h \in \vec{F_3}$
    such that $\vec{L_3}(h) \simsub{23} D$.
\end{theorem}
\begin{proof}
    First suppose that $D$ is a link diagram with $D\simsub{23}\vec{L_3}(h)$ for some $h\in \vec{F_3}$.
    Then $D\simsub{23}L_3(h)$, so $D$ is compliant
    by Corollary~\ref{corollaryf3}.
    
    Now suppose $D$ is compliant.
    By Lemma~\ref{aiellolemma}, there is some $g \in \vec{F}$ such that $\vec{L}(g)\simsub{123} D$.
    There is a natural embedding $\vec{\phi}\colon \vec{F} \to \vec{F_3}$ such that $\vec{L_3} \circ \vec{\phi} = \vec{L}$, where supplementary arcs are re-imagined
    as parts of the tree. Put $h = \vec{\phi}(g)$,
    so then $\vec{L_3}(h) = \vec{L}(g) \simsub{123} D$.
    Each component of $\vec{L_3}(h)$ already has a leaf-arc
    by construction because
    every part of the diagram that crosses
    the horizontal axis (other than the closure strand) is part of a leaf-arc.
    By Lemma~\ref{cowardlemma}, it suffices to manipulate
    $h$ (replacing it by some other $h'\in \vec{F_3}$)
    so that for each component $C$ of $D$,
    the corresponding component $C'$ of $L(h')$
    has $\Omega(C) = \Omega(C')$ and $w(C) = w(C')$,
    while retaining $\vec{L_3}(h)\simsub{123} D$.
    We accomplish this by adjusting $\Omega$ and $w$
    for each component individually.
    
    Let $C$ be a component of $D$ and let $C'$ be the corresponding 
    component of $\vec{L_3}(h)$. Since $D$ and $\vec{L_3}(H)$
    are equivalent, and each is compliant, we have
    \[
        \alpha_D(C) = \beta_D(C) = \beta_{\vec{L_3}(h)} = \alpha_{\vec{L_3}(h)}(C').
    \]
    By Proposition~\ref{parityprop}, it follows that $w(C) - w(C')$ and $\Omega(C) - \Omega(C')$ are even. Thus, it suffices to repeatedly
    add $\pm 2$ to $w(C')$ and $\Omega(C')$ until they match
    $w(C)$ and $\Omega(C)$. To adjust $\Omega(C')$, we already have the moves
    from Figure~\ref{fig:oriented-insertions}, now regarding the dotted
    supplementary arcs as part of the tree. There is another complication
    in that when inserting such diagrams at some leaf in a ternary tree,
    the shaded face of the diagram can be on the right. In such cases,
    we must insert the reflections of the diagrams
    in Figure~\ref{fig:oriented-insertions} across a vertical axis.
    This swaps the effect on the change of index $\Omega$
    of the two moves, but a move is always available to increase or decrease $\Omega$
    of a given component by $2$.
    
    We use a similar process to adjust the writhe of each component,
    using the moves from Figure~\ref{fig:oriented-write-insertions}.
    Again, if the shaded face of the diagram is on the right,
    then we can reflect across the vertical axis and
    swap the effects of the two moves.
\begin{figure}
    \centering
    \includegraphics{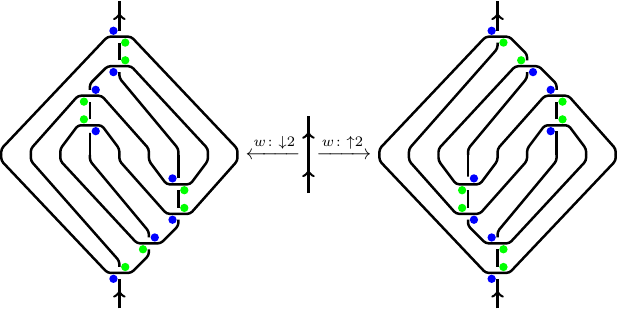}
    \caption{These two diagrams can be inserted at any leaf-arc of an existing diagram from $\vec{F_3}$ to get another oriented diagram with the writhe $w$ changed by $\pm 2$. If the leaf-arc is directed down instead of up, these same moves apply, but they have the inverse effects.}
    \label{fig:oriented-write-insertions}
\end{figure}
\end{proof}
}

\refchange{

\clearpage

\section{Conclusion}
In this paper, we classified the regular isotopy classes of link diagrams arising from elements of Thompson's groups $F, \vec{F}, F_3,$ and $\vec{F}_3$. For $F, F_3, $ and $\vec{F}_3$, the conclusions were the same: a link diagram $D$ is regular-isotopic to a link diagram arising from Thompson's group $(F, F_3, \text{ or }\vec{F}_3)$ if and only if $D$ is \textit{compliant}. In the case of $\vec{F}$, the classification was slightly different: a link diagram $D$ is regular-isotopic to a link diagram arising from $\vec{F}$ if and only if it is \textit{oriented-compliant}. In the case of $\vec{F}$, there was a pairing between crossings in the ``top half'' of the link diagram and the ``bottom half'' of the link diagram which corresponded positive crossings to negative crossings, forcing us to consider the function $u_D$ over $\mathbb{Z}$ instead of $\mathbb{Z}/2\mathbb{Z}$. There was no such phenomenon in $F$, $F_3$, or $\vec{F}_3$, allowing us to obtain more regular isotopy classes of link diagrams from each of those groups.

\begin{remark} One fundamental question about the construction of knots and links from elements of Thompson's groups, asked by Jones in \cite[Remark 5.3.3.2]{jones2014unitary}, is to prove an analogue of Markov's theorem for Thompson's group. In other words, the problem is to produce a set of transformations on elements of $F$ that preserve the equivalence class of the corresponding diagram, such that for any two $g,h\in F$ with $L(g)\simsub{123} L(h)$, we can transform $g$ to $h$ using these transformations. In this paper, we produce some such transformations, for example, in Figures \ref{fig:makeleaftree} and \ref{fig:oriented-makeleaf}. Our results also place limitations on these moves. For example, no transformation on elements of $F$ will perform a single Reidemeister I move on the corresponding link diagram.
\end{remark}
}

\section{Acknowledgements}
This work was done as part of the 2020 Knots and Graphs Working Group at The Ohio State University. We are grateful to the OSU Honors Program Research Fund and to the NSF-DMS \#1547357 RTG grant: Algebraic Topology and Its Applications for generous financial support. We are indebted to Sergei Chmutov and Matthew Harper for their guidance and helpful suggestions throughout. Special thanks to Jake Huryn for many helpful conversations and an invaluable contribution to Lemma \ref{leaflemma}, and to Vaughan Jones for suggesting improvements to the paper.

\bibliographystyle{plain}
\bibliography{references}

\end{document}